 \newtheorem{lemma}{Lemma}[section]
 \newtheorem{theorem}{Theorem}[section]
\journal{Appl. Math. Lett.}
\begin{document}
\begin{frontmatter}

\title{Connectivity of direct products by  $K_2$}
\author{Wei Wang  }
\author{Zhidan Yan \corref{cor1}}
\cortext[cor1]{Corresponding author.
Tel:+86-997-4680821; Fax:+86-997-4682766.\\
\ead{yanzhidan.math@gmail.com}}

\address{College of Information Engineering, Tarim University, Alar 843300, China}

\begin{abstract}
Let $\kappa(G)$ be the connectivity of $G$. The Kronecker product
$G_1\times G_2$ of graphs $G_1$ and $G_2$ has vertex set
$V(G_1\times G_2)=V(G_1)\times V(G_2)$ and edge set $E(G_1\times
G_2)=\{(u_1,v_1)(u_2,v_2):u_1u_2\in E(G_1),v_1v_2\in E(G_2)\}$. In
this paper, we prove that
 $\kappa(G\times K_2)=\textup{min}\{2\kappa(G) , \textup{min}\{|X|+2|Y|\}\}$,  where
the second minimum is taken over all  disjoint sets $X,Y\subseteq
V(G)$ satisfying (1)$G-(X\cup Y)$ has a bipartite  component $C$,
and (2) $G[V(C)\cup \{x\}]$ is also bipartite for each $x\in X$.

\end{abstract}

\begin{keyword}
connectivity \sep Kronecker product \sep separating set \MSC 05C40
\end{keyword}

\end{frontmatter}

\section{Introduction}
\label{intro} Throughout this paper, a graph $G$ always means a
finite undirected graph without loops or multiple edges. It is well
known that a network is
 often modeled as a graph and the
classical measure of the reliability is the connectivity
 and the edge connectivity.

  The connectivity of a simple graph $G = (V(G), E(G))$ is the number,
  denoted by $\kappa(G)$,   equal to the fewest number of vertices whose
   removal from $G$ results in a disconnected or trivial graph. A set $S\subseteq V(G)$
   is a separating set of $G$, if either $G-S$ is   disconnected or has
   only one vertex. Let $G_1$ and $G_2$ be two graphs, the Kronecker
   product $G_1\times G_2$ is the graph defined on the Cartesian
   product of vertex sets of the factors, with two vertices
   $(u_1,v_1)$ and $(u_2,v_2)$ adjacent if and only if $u_1u_2\in E(G_1)$
   and $v_1v_2\in E(G_2)$. This product is one of the  four standard graph products \cite{imrich2000} and is known under many different
   names, for instance as the direct product, the cross product and conjunction.
   Besides the famous Hedetniemi's conjecture on chromatic number
   of Kronecker product of two graphs, many different properties of Kronecker product have been investigated,
   and this product has several applications.

   Recently, Bre\v{s}ar and \v{S}pacapan \cite{bresar2008} obtained an
   upper bound and a low bound on the edge connectivity of Kronecker products with some exceptions; they
   also obtained several upper bounds on the vertex connectivity of
   the Kronecker product of graphs. Mamut and Vumar \cite{mamut2008} determined the
   connectivity of  Kronecker product of two complete graphs. Guji and Vumar \cite{guji2009} obtained the
   connectivity of Kronecker product of a bipartite graph and a
   complete graph. These two results are generalized in \cite{wang2010},
   where the author proved a formula for the
   connectivity of Kronecker product of an arbitrary graph and a
   complete graph of order $\ge 3$, which was conjectured in
   \cite{guji2009}. We mention that a different proof of the same result can be found in \cite{wangappear}. For the left case $G\times K_2$,
   Yang \cite{yang2007} determined an explicit formula for its edge
   connectivity. Bottreau and M\'{e}tivier \cite{bottreau1998} derivied a criterion for
   the existence of a cut vertex  of $G\times K_2$, see also
   \cite{hafner1978}.  In this paper, based on a similar argument of  \cite{wangappear}, we determine a formula for $\kappa(G\times
   K_2)$. For more study on the connectivity of  Kronecker product
   graphs, we refer to \cite{guo2010,ou2011}.

   Let $X,Y\subseteq V(G)$ be two disjoint sets with $V(G)-(X \cup   Y)\ne \emptyset$.
   We shall call $(X,Y)$  a \emph{b-pair} of $G$
   if it satisfies:\\
   (1). $G-(X\cup Y)$ has a bipartite component $C$, and\\
   (2). $G[V(C)\cup\{x\}]$ is also bipartite for each $x\in X$.

   Denote $b(G)=\textup{min}\{|X|+2|Y|:(X,Y)\ \textit{is a b-pair
   of}\ G\}$. Our main result is the following
   \begin{theorem}\label{main}
       $\kappa(G\times K_2)=\textup{min}\{2\kappa(G),b(G)\}$.
   \end{theorem}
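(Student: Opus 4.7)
The plan is to prove the two inequalities $\kappa(G\times K_2)\le\min\{2\kappa(G),b(G)\}$ and $\kappa(G\times K_2)\ge\min\{2\kappa(G),b(G)\}$ separately. For the upper bound $\kappa(G\times K_2)\le 2\kappa(G)$, I would take a minimum vertex cut $S$ of $G$ and verify that $S\times V(K_2)$ separates $G\times K_2$, using that every path in $G\times K_2$ projects to a walk in $G$ which must traverse $S$ to reach distinct components of $G-S$. For $\kappa(G\times K_2)\le b(G)$, I would take a b-pair $(X,Y)$ realising $b(G)$ with witness bipartite component $C$ of bipartition $V(C)=A\cup B$; condition~(2) allows me to partition $X=X_A\cup X_B$ according to which class of the extended bipartition each $x\in X$ joins, and then $T:=(Y\times\{0,1\})\cup(X_A\times\{1\})\cup(X_B\times\{0\})$ is a cut of size $|X|+2|Y|$ because $(A\times\{0\})\cup(B\times\{1\})$ becomes separated from the rest of $(G\times K_2)-T$.

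For the lower bound, let $T$ be a minimum vertex cut of $G\times K_2$ and set $Y=\{v:(v,0),(v,1)\in T\}$, $X_i=\{v:(v,i)\in T,\,(v,1-i)\notin T\}$ for $i\in\{0,1\}$, and $X=X_0\cup X_1$, so that $|T|=|X|+2|Y|$ and $(G\times K_2)-T=(G-Y)\times K_2-W$ with $W=(X_0\times\{0\})\cup(X_1\times\{1\})$. If $Y$ is already a separating set of $G$ then $|T|\ge 2|Y|\ge 2\kappa(G)$ and I am done; otherwise $G-Y$ is connected. The degenerate case $V(G)=X\cup Y$ cannot arise at the minimum, since the pair $(V(G)-\{v\},\emptyset)$ is always a b-pair and already yields $b(G)\le |V(G)|-1<|T|$ in that situation.

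Next, fix any component $D$ of $(G\times K_2)-T$ and let $V_i=\{v:(v,i)\in D\}$. A short propagation argument, using that $G\times K_2$ is bipartite between its two layers and that any non-$T$ neighbour of a $D$-vertex stays in $D$, shows that each component $C$ of $G-(X\cup Y)$ is either \emph{doubled}, meaning $V(C)\subseteq V_0\cap V_1$, or \emph{split}, meaning that $V_0\cap V(C)$ and $V_1\cap V(C)$ partition $V(C)$ into independent sets, which forces $C$ to be bipartite. When some split component $C$ exists, I would argue that $(X,Y)$ is a b-pair with witness $C$: property~(2) is the only nontrivial point, and for any $x\in X_0$ the vertex $(x,1)\notin T$ lies in some component $D'$; every $u\in N_G(x)\cap V(C)$ then forces $(u,0)\in D'$, and the bipartition of $D'$ restricted to $V(C)\times\{0,1\}$ is compatible with $C$'s own bipartition, so $N_G(x)\cap V(C)$ lies in a single class of $C$ and $G[V(C)\cup\{x\}]$ is bipartite. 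This yields $|T|\ge b(G)$.

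The main obstacle is the remaining subcase in which every component of $G-(X\cup Y)$ is doubled. Tracing a path in the connected graph $G-Y$ between any two doubled components $C,C'$ through an intermediate $x\in X$, the component of $(G\times K_2)-T$ containing $(x,1-i(x))$ must coincide with the one containing $V(C)\times\{0,1\}$ and with the one containing $V(C')\times\{0,1\}$, so all doubled components collapse into a single component of $(G\times K_2)-T$; any disconnection must therefore arise from $X$-vertices whose non-$T$ neighbours lie entirely within $X$, so $W$ forms a cut of $(G-Y)\times K_2$ with at most one copy per vertex, and $G-Y$ must be non-bipartite. Following the strategy of \cite{wangappear}, the crux is to show that any such cut of $(G-Y)\times K_2$ has size at least $2\kappa(G-Y)$; combining with the standard bound $\kappa(G-Y)\ge\kappa(G)-|Y|$ then gives $|T|=|X|+2|Y|\ge 2\kappa(G)$, and this final reduction is where I expect the technical heart of the proof to lie.
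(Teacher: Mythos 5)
Your upper bound and the ``split component'' half of your lower bound are sound: the cut $(Y\times V(K_2))\cup(X_A\times\{1\})\cup(X_B\times\{0\})$ matches the paper's construction via the map $\varphi$, and your observation that a split component $C$ forces $N_G(x)\cap V(C)$ into a single colour class (because the two components of $C\times K_2$ lie in distinct components of $(G\times K_2)-T$) is a correct direct route to $|T|\ge b(G)$, playing the role that Lemma~\ref{com} plays in the paper's inductive argument.

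The genuine gap is in your final, all-doubled case. The claim you identify as the crux --- that every cut of $(G-Y)\times K_2$ using at most one copy of each vertex has size at least $2\kappa(G-Y)$ --- is false: for $H=K_4$ and a vertex $v$, deleting $\{(u,0):u\in N_H(v)\}$ isolates $(v,1)$ and has size $3<6=2\kappa(H)$. More to the point, even restricted to the all-doubled configuration your target inequality $|T|\ge 2\kappa(G)$ cannot be the right goal: if the disconnection leaves a component $D'$ consisting only of surviving $X$-copies, then every $x$ with $(x,1-i)\in D'$ has $N_G(x)\subseteq X\cup Y$, which yields only $|T|\ge |X|+|Y|\ge d(x)\ge\delta(G)\ge b(G)$ (Lemma~\ref{newinv}(2)), and $b(G)$ may well be smaller than $2\kappa(G)$. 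So the correct conclusion in this case is again only $\min\{2\kappa(G),b(G)\}$, and establishing it requires the dichotomy the paper builds into Lemma~\ref{qut}: either one side of the disconnection is entirely contained in the boundary of the $(U,V)$-edges, giving a degree bound and hence $b(G)$, or both sides have interior vertices, in which case each trace of the boundary is a separating set of $G$ and one gets $2\kappa(G)$. Without that case split your reduction to $\kappa(G-Y)\ge\kappa(G)-|Y|$ has nothing true to combine with, and the lower bound is not established; note also that your reformulated crux is essentially the original theorem for $G-Y$ with the $b$-term dropped, so the reduction does not make the problem easier.
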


We end this section by giving some useful properties of the new
defined graph invariant $b(G)$. Let $v\in V(G)$, we use $N(v)$,
$d(v)$ and $\delta(G)$ to denote the neighbor set of $v$, the degree
of $v$, and the minimum degree of $G$, respectively.

\begin{lemma}\label{newinv}
Let $m=|G|\ge2$ and $u$ be any vertex of $G$. Then\\
\textup{(1)}.$b(G)=0$, if $G$ is bipartite.\\
\textup{(2)}.$b(G)\le \delta(G)$.\\
\textup{(3)}.$b(G)\le b(G-u)+2$.

\end{lemma}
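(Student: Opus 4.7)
The plan for all three parts is to exhibit a specific b-pair whose weight $|X| + 2|Y|$ matches the claimed bound; no structural argument beyond unpacking the definition is required.

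Parts \textup{(1)} and \textup{(2)} are immediate. For \textup{(1)}, I would take $X = Y = \emptyset$: since $|G| \ge 2$, $V(G) \setminus (X \cup Y) = V(G)$ is nonempty, every component of the bipartite graph $G$ is bipartite, and condition (2) of the b-pair definition is vacuous because $X = \emptyset$; hence $b(G) = 0$. For \textup{(2)}, I would pick a vertex $v$ with $d(v) = \delta(G)$ and set $X = N(v)$, $Y = \emptyset$; then $v$ is isolated in $G - X$, the singleton $\{v\}$ is a bipartite component, and the requirement that $G[\{v, x\}]$ be bipartite for each $x \in N(v)$ is trivial since a single edge is bipartite. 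This b-pair has weight $\delta(G)$.

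The main (though still short) step is \textup{(3)}. The natural idea is to ``buy'' the removal of $u$ at additional cost $2$: let $(X', Y')$ attain $b(G-u)$ with bipartite component $C'$, and set $X := X'$, $Y := Y' \cup \{u\}$. Since $u \notin V(G-u) \supseteq X' \cup Y'$, the sets $X$ and $Y$ are disjoint, and
\[ G - (X \cup Y) = (G-u) - (X' \cup Y'), \]
so $C'$ remains a bipartite component of $G - (X \cup Y)$. Moreover, for every $x \in X = X'$, the induced subgraph $G[V(C') \cup \{x\}]$ coincides with $(G-u)[V(C') \cup \{x\}]$ because $u$ does not lie in this vertex set, so condition (2) of the b-pair definition is inherited verbatim from the b-pair for $G-u$. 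Hence $(X, Y)$ is a b-pair of $G$ of weight $|X'| + 2|Y'| + 2 = b(G-u) + 2$. The only subtlety worth flagging is this last identification of induced subgraphs; it is also the reason the extension from $G-u$ to $G$ costs $2$ rather than $1$, because $u$ is placed in $Y$ (contribution $2$) rather than kept alive in a potentially still-bipartite enlargement of $C'$.
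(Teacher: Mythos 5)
Your proposal is correct and follows exactly the paper's argument: $(\emptyset,\emptyset)$ for part (1), $(N(v),\emptyset)$ with $v$ of minimum degree for part (2), and $(X',Y'\cup\{u\})$ for part (3). The only difference is that you spell out the verification of the b-pair conditions in part (3), which the paper dismisses as ``straightforward''; your details are accurate.
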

\begin{proof}
Part (1) is clear since $(\emptyset,\emptyset)$ is a b-pair of any
bipartite graph by the definition of b-pairs. For each $v\in V(G)$,
$(N(v),\emptyset)$ is a b-pair of $G$(Take the isolated vertex $v$
in $G-N(v)$ as the bipartite component $C$). Therefore $b(G)\le
d(v)$ and part (2) is verified. Similarly, Let $(X',Y')$ be any
b-pair of $G-u$. It is straightforward to show that
$(X',Y'\cup\{u\})$ is a b-pair of $G$. Therefore, $b(G)\le
|X'|+2|Y'|+2$ and part (3) is verified.
\end{proof}

\section{Proof of the main result}
 We first recall some basic results on the connectivity of Kronecker
 product of graphs \cite{weichsel1962}, see also \cite{imrich2000}.

 \begin{lemma}\label{basic}
 The Kronecker product of two nontrivial graphs is connected if and
 only if both factors are connected and at least one factor is
 nonbipartite. In particular, $G\times K_2$ is connected if and only
 if $G$ is a connected nonbipartite graph.
 \end{lemma}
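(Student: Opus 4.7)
The plan is to prove the two directions of the biconditional separately, throughout exploiting the fundamental dictionary that a walk of length $n$ from $(u,v)$ to $(u',v')$ in $G_1\times G_2$ is exactly a pair of walks of the same length $n$, one from $u$ to $u'$ in $G_1$ and one from $v$ to $v'$ in $G_2$. This reduces connectivity questions in the product to questions about existence of walks of matching length in the factors.

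For the necessity direction I would argue by contrapositive. If $G_1$ is disconnected with components $C_1,C_2,\dots$, then any edge of $G_1\times G_2$ keeps the first coordinate inside one component, so each set $V(C_i)\times V(G_2)$ is a union of components of the product, showing $G_1\times G_2$ is disconnected; symmetry handles the case when $G_2$ is disconnected. If instead both factors are bipartite, with bipartitions $(A_1,B_1)$ and $(A_2,B_2)$, then an edge in the product must cross the bipartition in \emph{both} coordinates simultaneously, so $(A_1\times A_2)\cup(B_1\times B_2)$ and $(A_1\times B_2)\cup(B_1\times A_2)$ are both nonempty with no edges between them, again disconnecting the product.

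For sufficiency, assume both factors are connected and, without loss of generality, $G_1$ is nonbipartite. Given arbitrary vertices $(u_1,v_1),(u_2,v_2)$, I would first fix any walk in $G_2$ from $v_1$ to $v_2$ of some length $\ell$; by retracing an edge, walks of every length $\ell,\ell+2,\ell+4,\dots$ exist in $G_2$. The key step is to produce walks of matching length in $G_1$: take an odd closed walk $W_0$ in $G_1$, which exists because $G_1$ contains an odd cycle, and use connectivity to conjugate $W_0$ to a closed odd walk based at any prescribed vertex. Concatenating a fixed $u_1u_2$-walk with optional insertions of such an odd closed walk together with trivial edge-retracings yields, in $G_1$, walks from $u_1$ to $u_2$ of every sufficiently large length of either parity. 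Choosing a length which simultaneously is achievable in $G_2$ (hence has the parity of $\ell$) and is large enough to be achievable in $G_1$, the walk dictionary lifts the pair to a walk in $G_1\times G_2$ joining $(u_1,v_1)$ to $(u_2,v_2)$. The ``in particular'' clause for $G\times K_2$ is then immediate because $K_2$ is bipartite.

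The main obstacle I anticipate is the length/parity lemma for the nonbipartite factor: one must argue that every sufficiently large integer of the required parity is realized as the length of some $u_1u_2$-walk in $G_1$, not merely that walks of both parities exist. Once this parity-matching lemma is in place, the remaining verifications are routine applications of the walk dictionary.
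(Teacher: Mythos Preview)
Your proof sketch is correct and is essentially the classical argument due to Weichsel. Note, however, that the paper does not supply its own proof of this lemma: it is stated as a known result and attributed to \cite{weichsel1962} (see also \cite{imrich2000}), so there is no in-paper proof to compare against. Your walk-dictionary/parity-matching argument is exactly the standard textbook route recorded in those references.
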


 \begin{lemma}\label{bip2}
 Let $G$ be a connected bipartite graph with bipartition $(P,Q)$ and $V(K_2)=\{a,b\}$. Then
  $G\times K_2$ has exactly two connected components isomorphic to
  $G$, with bipartitions $(P\times \{a\},Q\times \{b\})$ and $(P\times \{b\},Q\times \{a\})$, respectively.
 \end{lemma}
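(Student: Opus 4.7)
The plan is to partition $V(G\times K_2)=V(G)\times\{a,b\}$ into the two candidate vertex sets $A=(P\times\{a\})\cup(Q\times\{b\})$ and $B=(P\times\{b\})\cup(Q\times\{a\})$, show there are no edges between $A$ and $B$, and then show that each of the induced subgraphs on $A$ and on $B$ is connected and isomorphic to $G$. Since $A$ and $B$ cover all vertices and are nonempty (as $P,Q$ are nonempty by the bipartition and $G$ being connected with $|G|\ge 1$), this will give exactly two connected components.

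First I would verify the no-edges-between-$A$-and-$B$ claim by inspecting the definition of the Kronecker product. Any edge of $G\times K_2$ has the form $(u_1,v_1)(u_2,v_2)$ with $u_1u_2\in E(G)$ and $v_1v_2\in E(K_2)$. Because $G$ is bipartite with bipartition $(P,Q)$, the endpoints $u_1,u_2$ lie on opposite sides of $(P,Q)$; because $E(K_2)$ is the single edge $ab$, we have $\{v_1,v_2\}=\{a,b\}$. Running through the four cases shows that either both endpoints lie in $A$ or both lie in $B$, so no edge crosses between $A$ and $B$.

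Next I would produce an isomorphism from $G[A]$ (and, symmetrically, $G[B]$) to $G$. Define $\varphi:A\to V(G)$ by $\varphi(p,a)=p$ for $p\in P$ and $\varphi(q,b)=q$ for $q\in Q$; this is clearly a bijection. An edge of $G\times K_2$ inside $A$ is exactly a pair $(p,a)(q,b)$ with $p\in P$, $q\in Q$, and $pq\in E(G)$, which $\varphi$ sends to the edge $pq$ of $G$, and conversely each edge of $G$ (which necessarily joins $P$ to $Q$) lifts uniquely to such a pair. Hence $\varphi$ is a graph isomorphism, and the stated bipartition $(P\times\{a\},Q\times\{b\})$ is exactly the image of $(P,Q)$. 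The same argument with the roles of $a$ and $b$ swapped handles $B$.

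Finally, connectivity of $G[A]$ and $G[B]$ is immediate from the isomorphisms to the connected graph $G$. Combined with the absence of $A$-$B$ edges, this proves that $A$ and $B$ are exactly the two connected components. There is no real obstacle here; the only thing to be careful about is the bookkeeping of the four combinations of sides in $G$ and in $K_2$ when checking that edges do not leave $A$ or $B$, and the fact that both $P$ and $Q$ are nonempty (otherwise $G$ would have no edges, contradicting that it is a connected bipartite graph on at least the vertices needed to exhibit a bipartition with nontrivial structure; the degenerate cases can be dispatched at the start).
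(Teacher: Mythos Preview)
The paper does not actually supply a proof of this lemma; it is stated (together with the preceding lemma) as a basic fact recalled from the literature, with references to Weichsel and to Imrich--Klav\v{z}ar. Your argument is correct and is exactly the standard direct verification: partition $V(G\times K_2)$ into $A=(P\times\{a\})\cup(Q\times\{b\})$ and $B=(P\times\{b\})\cup(Q\times\{a\})$, observe from the definition of the product that every edge stays within $A$ or within $B$, and then note that projection to the first coordinate is a graph isomorphism from each part to $G$. One small clean-up: you do not need a separate ``degenerate case'' discussion at the end. If $G$ is connected with at least two vertices, every vertex has a neighbour, which lies on the opposite side of the bipartition, so both $P$ and $Q$ are automatically nonempty; if $G$ is a single vertex the statement is trivial.
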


From Lemma \ref{newinv}(1) and Lemma \ref{basic}, we only need to
proof Theorem \ref{main} for connected and nonbipartite graphs. For
each $u\in V(G)$, set $S_u=\{u\}\times V(K_2)=\{(u,a),(u,b)\}$.
Let $S\subseteq V(G\times K_2)$ satisfy the following two assumptions: \\
\textbf{Assumption 1}. $|S|<\mbox{min}\{2\kappa(G),b(G)\}$, and\\
\textbf{Assumption 2}. $S'_u:=S_u-S\ne\emptyset$ for each $u\in
V(G)$.

Let $G^*$ be the graph whose vertices are the classes $S'_u$ for all
$u\in V(G)$ and in which two different vertices $S'_u$ and $S'_v$
are adjacent if $G\times K_2-S$ contains an $(S'_u-S'_v)$ edge, that
is, an edge with one end in $S'_u$ and the other one in $S'_v$.

Under the two assumptions on $S\subseteq V(G\times K_2)$ , the
connectedness of  $G\times K_2-S$ is verified by the following two
lemmas.

 \begin{lemma}\label{qut}
 If $G$ is a connected nonbipartite graph then $G^*$ is connected.
 \end{lemma}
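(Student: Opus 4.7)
The plan is to argue by contradiction: assume $G^*$ is disconnected and choose a partition $V(G^*)=V(G)=A\cup B$ into nonempty parts with no $G^*$-edges crossing. I will then show that the structure of $S$ forces either $|S|\ge 2\kappa(G)$ or a very low-degree vertex, contradicting Assumption~1 in either case.

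The preliminary step is to classify which edges of $G$ are ``missing'' from $G^*$. Assumption~2 forces $|S\cap S_u|\in\{0,1\}$, so I would define $X=\{u\in V(G):S\cap S_u\ne\emptyset\}$ and partition $X=X_a\cup X_b$ according to which of $(u,a),(u,b)$ lies in $S$; then $|X|=|S|$. For an edge $uv\in E(G)$, the two lifts $(u,a)(v,b)$ and $(u,b)(v,a)$ both die in $G\times K_2-S$ exactly when $\{u,v\}\subseteq X_a$ or $\{u,v\}\subseteq X_b$, as one sees by a short case check on which endpoints sit in $X_a,X_b$. I will call such edges \emph{bad}; then $S'_u$ and $S'_v$ fail to be adjacent in $G^*$ precisely when $uv$ is bad. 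In particular, every $G$-edge from $A$ to $B$ is bad and so has both endpoints in $X$.

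\textbf{Case 1:} $A\not\subseteq X$ and $B\not\subseteq X$. Any $G$-edge from $A\setminus X$ to $B$ would be bad, forcing its $A$-endpoint into $X$, which is impossible; hence $X\cap A$ separates the nonempty sets $A\setminus X$ and $B$ in $G$, so $|X\cap A|\ge\kappa(G)$. Symmetrically $|X\cap B|\ge\kappa(G)$, and summing gives $|S|=|X|\ge 2\kappa(G)$, contradicting Assumption~1.

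\textbf{Case 2:} Up to swapping $A$ and $B$, assume $A\subseteq X$. Take any $u\in A$; every neighbour of $u$ in $A$ lies in $X$ because $A\subseteq X$, and every neighbour of $u$ in $B$ lies in $X$ because the joining edge is bad. Since $u\in X$ and $u\notin N(u)$, this forces $d(u)\le|X|-1=|S|-1$. On the other hand, Assumption~1 together with Lemma~\ref{newinv}(2) yields $d(u)\ge\delta(G)\ge b(G)>|S|$, a contradiction. I expect the only delicate step to be the edge-classification, which is what links the combinatorial data of $S\subseteq V(G\times K_2)$ back to a coloured vertex set in $G$; after that, Case~1 reduces to a standard vertex-cut count and Case~2 collapses cleanly via the inequality $b(G)\le\delta(G)$.
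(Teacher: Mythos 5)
Your proposal is correct and follows essentially the same strategy as the paper's proof: assume $G^*$ disconnected, observe that every $G$-edge joining the two parts must have both endpoints among the vertices hit by $S$ (your ``bad edge'' analysis via $X_a,X_b$ is a slightly sharper version of the paper's observation that non-adjacent classes $S'_u,S'_v$ must both be singletons), and then split into the same two cases, one yielding $|S|\ge 2\kappa(G)$ from two vertex cuts and the other yielding $d(u)\le |S|-1<b(G)\le\delta(G)$ via Lemma~\ref{newinv}(2). The only cosmetic difference is that the paper works with the set $Z$ of endpoints of crossing edges rather than with all of $X$.
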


 \begin{proof}
 Suppose to the contrary that $G^*$ is disconnected. Then the
 vertices of $G^*$ can be partitioned into two nonempty parts, $U^*$
 and $V^*$, such that there are no $(U^*-V^*)$ edges. Let
 $U=\{u\in V(G):S'_u\in U^*\}, V=\{v\in V(G):S'_v\in V^*\}$ and $Z$ be the
 collection of ends of all $(U-V)$ edges. Let $Z^*=\{S'_u:u\in
 V(G),|S'_u|=1\}$. For any $u\in Z$, there exists an edge $uv\in
 E(U,V)$. It follows that both $S'_u$ and $S'_v$ contain exactly one
 element since otherwise  $G\times K_2-S$ contains an $(S'_u-S'_v)$
 edge, i.e., $S'_uS'_v\in E(G^*)$, which is contrary to the fact
 that there are no $(U^*-V^*)$ edges. Therefore, $S'_u\in Z^*$ and we
 have $|Z|\le|Z^*|$ by the arbitrariness of $u$ in $Z$.

  \textbf{case 1}: Either $U\subseteq Z$ or $V\subseteq Z$. We may
  assume $U\subseteq Z$. Let $u$ be any vertex in $U$, then  $d(u)\le|Z|-1$,
  and hence $\delta(G)\le|Z|-1$. Therefore, by  Lemma \ref{newinv}(2), we have
   $|S|=|Z^*|\ge|Z|>\delta(G)\ge b(G)$, a contradiction.

 \textbf{case 2}: $U\nsubseteq Z$ and  $V\nsubseteq Z$. Either of
 $U\cap Z$ and $V\cap Z$ is a separating set of $G$. Therefore,
 $\kappa(G)\le\mbox{min}\{|U\cap Z|,|V\cap Z|\}\le|Z|/2$. Similarly,
 we have $|S|=|Z^*|\ge|Z|\ge2\kappa(G)$, again a contradiction.
\end{proof}

\begin{lemma}\label{com} Any vertex $S'_w$ of $G^*$, as a subset of $V(G\times
K_2-S)$ is contained in the vertex set of some component of $G\times
K_2-S$.
\end{lemma}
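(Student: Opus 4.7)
The plan is to proceed by contradiction: assuming $|S'_w|=2$ (the case $|S'_w|=1$ being trivial) and that $(w,a),(w,b)$ lie in distinct components $C,C'$ of $G\times K_2-S$, I will exhibit a b-pair of $G$ of total weight $|S|$, which by Assumption~1 forces $b(G)\le|S|<b(G)$, the desired contradiction.

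Set $S_a=\{u\in V(G):(u,a)\in S\}$ and $S_b=\{u\in V(G):(u,b)\in S\}$; Assumption~2 gives $S_a\cap S_b=\emptyset$ and hence $|S|=|S_a|+|S_b|=|S_a\cup S_b|$. Define $A=\{u:(u,a)\in V(C)\}$, $B=\{u:(u,b)\in V(C)\}$, and $A',B'$ analogously for $C'$; then $w\in A\cap B'$, and $A\cap A'=\emptyset=B\cap B'$ since $C\ne C'$. The proposed b-pair is $(X,Y)=(S_a\cup S_b,\emptyset)$, and I take the bipartite component witness to be $R$, the connected component of $w$ in $G-X$.

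The key step is to show $R\subseteq(A\cap B')\cup(A'\cap B)$ and that $G[R]$ is bipartite with parts $R\cap A\cap B'$ and $R\cap A'\cap B$. For any walk $w=w_0,w_1,w_2,\ldots$ inside $R$, no $w_i$ belongs to $S_a\cup S_b$, so both lifts $(w_i,a),(w_i,b)$ survive in $G\times K_2-S$ and the edges $(w_i,a)(w_{i+1},b)$ and $(w_i,b)(w_{i+1},a)$ are present there. Starting from $(w_0,a)\in V(C)$ and $(w_0,b)\in V(C')$, induction along the walk forces $(w_i,a)\in V(C)$ and $(w_i,b)\in V(C')$ for even $i$ and the reverse for odd $i$. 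Any chord $uv\in E(G)$ joining two vertices in the same proposed part of $R$ would, by the same lifting, yield an edge of $G\times K_2-S$ between $V(C)$ and $V(C')$, contradicting their distinctness.

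It remains to check that $G[V(R)\cup\{x\}]$ is bipartite for each $x\in S_a\cup S_b$. Take $x\in S_a$ (the case $x\in S_b$ is symmetric); Assumption~2 gives $(x,b)\notin S$. For every $v\in N_G(x)\cap R$, the edge $(v,a)(x,b)\in E(G\times K_2-S)$ places $(x,b)$ in the component of $(v,a)$, so $x\in B$ when $v\in A$ and $x\in B'$ when $v\in A'$. Since $B\cap B'=\emptyset$, the set $N_G(x)\cap R$ lies entirely on a single side of the bipartition of $R$, and $x$ can be appended to the opposite side. Thus $(S_a\cup S_b,\emptyset)$ is a b-pair of weight $|S|$, contradicting Assumption~1. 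I expect the main obstacle to be the component-tracking in the third paragraph; once the alternating structure of both lifts along walks in $R$ is correctly set up, the remaining verifications are routine bookkeeping.
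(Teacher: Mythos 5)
Your proof is correct. It pivots on exactly the same object as the paper's proof: the set $X=\{u:|S'_u|=1\}$ (your $S_a\cup S_b$, the paper's $V$) satisfies $|X|=|S|<b(G)$, so $(X,\emptyset)$ cannot be a b-pair, and the component $R$ of $w$ in $G-X$ is the relevant witness. Where you differ is in direction and in tooling. The paper argues forward: since $(V,\emptyset)$ is not a b-pair, either $R$ is nonbipartite, in which case $R\times K_2$ is connected by Weichsel's theorem (Lemma \ref{basic}), or some $v\in V$ makes $G[V(R)\cup\{v\}]$ nonbipartite, and then the surviving lift of $v$ is a common neighbor of the two components of $R\times K_2$ described in Lemma \ref{bip2}, gluing them together around $S'_w$. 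You argue the contrapositive: assuming $(w,a)$ and $(w,b)$ are separated, you reconstruct a bipartition of $R$ by tracking which of the two components each lift of each vertex falls into along walks from $w$ (the disjointness $A\cap A'=\emptyset$ making the parity well defined), and you verify condition (2) of the b-pair definition for each $x\in X$ by the same component-tracking, using $B\cap B'=\emptyset$ to force all neighbours of $x$ in $R$ onto one side. This makes your argument self-contained --- you never invoke Lemmas \ref{basic} or \ref{bip2}, in effect reproving the special cases you need inline --- at the cost of somewhat longer bookkeeping. Two small points you should state explicitly: $V(G)-X\ne\emptyset$ (required in the definition of a b-pair) holds because $w\notin X$; and the two proposed parts of $R$ genuinely partition $R$ because a vertex in both would lie in $A\cap A'=\emptyset$.
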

\begin{proof}
If $|S'_w|=1$, then the assertion holds trivially. Now assume
$|S'_w|=2$. Let $U=\{u\in V(G):|S'_u|=2\}$, $V=\{v\in
V(G):|S'_v|=1\}$ be the partitions of $V(G)$ and $C$ the component
of $G-V$ containing $w\in U$.

Since $|V|=|S|<b(G)$ by Assumption 1, it follows that
$(V,\emptyset)$ is not a b-pair of $G$. Note $S'_w\subseteq
V(C\times K_2)$. We may assume that the component $C$ containing $w$
is bipartite since otherwise $C\times K_2$ is connected by Lemma
\ref{basic} and hence the result follows. Therefore, by the
definition of b-pairs, there exists a vertex $v\in V$ such that
$G[V(C)\cup\{v\}]$ is nonbipartite.

 Let $(P,Q)$ be the bipartition of $C$ and
$V(K_2)=\{a,b\}$. Then, by Lemma \ref{bip2}, $C\times K_2$ has
exactly two components $C_1$ and $C_2$ isomorphic to $C$, with
bipartition $(P\times \{a\},Q\times \{b\})$ and $(P\times
\{b\},Q\times \{a\})$, respectively. The nonbipartiteness of
$G[V(C)\cup\{v\}]$ implies that $v$ is a common neighbor of $P$ and
$Q$. By symmetry, we may assume $S'_v=\{(v,a)\}$. It is easy to see
that the subgraph induced by $V(C\times K_2)\cup S'_v$ is connected
since $(v,a)$ is a common neighbor of  $C_1$ and $C_2$.
\end{proof}

\textbf{Proof of Theorem \ref{main}} We apply induction on
$m=|V(G)|$.  It trivially holds when $m=1$. We therefore assume
$m\ge 2$ and the result holds for all graphs of order $m-1$.

Let $S_0$ be a minimum separating set of $G$ and $S=S_0\times
V(K_2)=\{(u,a),(u,b):u\in S_0\}$. Then $G\times K_2-S\cong
(G-S_0)\times K_2$ is disconnected by Lemma \ref{basic}. Therefore,
$\kappa(G\times K_2)\le2\kappa(G)$.

Let $(X,Y)$ be a b-pair of $G$ with $|X|+2|Y|=b(G)$.  Let $C$ be a
bipartite component of $G-(X\cup Y)$ with bipartition $(P,Q)$ such
that $G[V(C)\cup\{x\}]$ is also bipartite for each $x\in X$. Let
$C_1$ and $C_2$ be the two components of $C\times K_2$ with
bipartition $(P\times \{a\},Q\times \{b\})$ and $(P\times
\{b\},Q\times \{a\})$, respectively. Define an
injection $\varphi:X\rightarrow V(G\times K_2)$ as follows:\\
\[
\varphi(x) = \left\{ {\begin{array}{c@{{},{}}l}
   (x,b)& \textit{if x has a neighbor in P,}  \\
   (x,a)& otherwise.
\end{array}} \right.
\]
Let $S'=\varphi(X)$ and $S''=\{(u,a),(u,b):u\in Y\}$. Then $S'\cup
S''$ is a separating set since $C_1$ is a component of $G\times
K_2-(S'\cup S'')$, which implies $\kappa(G\times K_2)\le|S'\cup S''|
=|X|+2|Y|=b(G)$.

To show the other equality, we may assume $G$ is a connected
nonbipartite graph. Let $S\subseteq V(G\times K_2)$ satisfy
Assumption 1, i.e., $|S|<\mbox{min}\{2\kappa(G),b(G)\}$.

\textbf{case 1:} $S$ satisfies Assumption 2. It follows by Lemma
\ref{qut} and Lemma \ref{com} that $G\times K_2-S$ is connected.

\textbf{case 2:} $S$ does not satisfy Assumption 2, i.e., there
exists a vertex $u\in V(G)$ such that $S_u=\{(u,a),(u,b)\}\subseteq
S$. Therefore,
\begin{eqnarray*}
 |S-S_u|&=&|S|-2\\
       &<&\mbox{min}\{2\kappa(G),b(G)\}-2\\
       &=&\mbox{min}\{2(\kappa(G)-1),b(G)-2\}\\
       &\le&\mbox{min}\{2\kappa(G-u),b(G-u)\},
\end{eqnarray*}
where the last inequality above follows from Lemma \ref{newinv}(3).

By the induction assumption,
\begin{displaymath}
  \kappa((G-u)\times K_2)=\mbox{min}\{2\kappa(G-u),b(G-u)\}.
\end{displaymath}

Hence, $(G-u)\times K_2-(S-S_u)$ is connected. It follows by
isomorphism that  $G\times K_2-S$ is connected.

Either of the two cases implies that $(G\times K_2-S)$ is connected.
Thus, $\kappa(G\times K_2)\ge\mbox{min}\{2\kappa(G),b(G)\}$.

The proof of the theorem is complete by induction. \qed

\end{document}